\documentclass[english,a4paper,reqno,11pt]{amsart}
\usepackage[T1]{fontenc}
\usepackage{a4wide}
\usepackage{
    babel,
    lmodern,
    mathtools,
    amssymb,
    aligned-overset,
    enumerate,
    amsrefs,
    amsthm,
    thmtools,
    xcolor,
    hyperref,
    cleveref
}
\allowdisplaybreaks
\usepackage[all]{xy}

\newtheorem{teo}{Theorem}[section]

\newtheorem{lema}[teo]{Lemma}
\newtheorem{cor}[teo]{Corollary}
\newtheorem{prop}[teo]{Proposition}

\newcommand{\G}{\mathcal{G}}

\newcommand{\GG}{\mathcal{G}_2}

\def \si {\sigma}

\def \bet {\beta}
\def \tet {\theta}
\def \ga {\gamma}
\def \sii {\overline{\si}}
\def \gaa {\overline{\ga}}
\def \R {R^{\bet}}
\def \cbet {C(R)^{\bet}}
\def \Rh {R^{\bet_{\mathcal{H}}}}

\def \seta {\longrightarrow}

\def \HH {\mathcal{H}}
\def \L {\mathcal{L}}


\let\originaleqnarray\eqnarray  
\def\eqnarray{\settowidth{\arraycolsep}{$\mskip 0.5\thickmuskip$}\originaleqnarray}
\expandafter\let\expandafter\eqnarraystar
\csname eqnarray*\endcsname  
\expandafter\def\csname eqnarray*\endcsname
{\settowidth{\arraycolsep}{$\mskip 0.5\thickmuskip$}\eqnarraystar}

\begin{document}\thispagestyle{empty}

\title{Injectivity of the Galois map}
\author[Pedrotti, Tamusiunas]{Juliana Pedrotti, Thaísa Tamusiunas}
\address{Instituto de Matem\'atica, Universidade Federal do Rio Grande do Sul,
91509-900, Porto Alegre, RS, Brazil}
\email{julianabpedrotti@gmail.com}
\email{thaisa.tamusiunas@gmail.com}

\maketitle


\begin{abstract}
Given a Galois extension $R^{\beta} \subset R$, where $\beta$ is an action of a finite groupoid on a noncommutative ring, we present some conditions to the Galois map be injective.
\end{abstract}

\

\noindent \textbf{2010 AMS Subject Classification:}  13B05, 20L05, 16H05, 16W22, 16W55.

\noindent \textbf{Keywords:} Galois extension, Galois map, groupoid action.

\section{Introduction}
A Galois theory for groupoids acting on commutative rings was developed in \cite{bagio}, \cite{BST}, \cite{corta}, \cite{pata} and \cite{pataII}. Among other results, it was proven that, given an action $\beta$ of a groupoid $\G$ over a commutative ring $R$, there is a bijective correspondence between the wide subgroupoids of $\G$ and the $R^{\beta}$-subalgebras of $R$ that are $\beta$-strong and $R^{\beta}$-separable, where $R^{\beta}$ is the subalgebra of invariants of $R$ \cite[Theorem 4.6]{pataII}. This correspondence is called Galois correspondence, and the map which gives such correspondence is called Galois map. Furthermore, a Galois correspondence for group-type partial actions of groupoids was also developed in \cite[Theorem 5.7]{BST}, and a characterization for determined Galois extensions in terms of a partial isomorphism groupoid was given in \cite{corta}.

In the case of groupoids acting on a noncommutative ring, in \cite{pataIII} it was presented a characterization of central $\beta$-Galois algebras that have bijective Galois map, from the wide subgroupoids of $\G$ to the $R^{\beta}$-separable subalgebras of the ring $R$. Also, given a noncommutative Galois extension $R^{\beta} \subset R$, where $\beta = (\{E_g\}_{g \in \G}, \{\beta_g\}_{g \in \G})$ a unital action of the groupoid $\G$ on $R$, $C(R)$ the center of $R$ and $J_g = \{r \in E_g \mid r\beta_g(x1_{g^{-1}}) = xr,\, \text{for all}\,\ x \in R\}$, it was shown that if $J_g \neq 0$ for all $g \in \G$, then the Galois map is injective \cite[Theorem 3.3]{pataIII}. The condition $J_g \neq 0$ for all $g \in \G$ is satisfied if $R$ is a Hirata separable $\beta$-Galois extension or a central $\beta$-Galois algebra \cite[Theorem 3.5]{pataIII}. On the other hand, if the extension is commutative, then $J_g = \{0\}$ for all $g \notin \G_0$, where $\G_0$ is the set of identities of $\G$. In general, there are $\beta$-Galois extensions such that $J_h = \{0\}$ and $J_l \neq \{0\}$, for $h, l \in \G$, but $h, l \notin \G_0$. In this direction, we present in this paper more general conditions to classify $\beta$-Galois extensions with injective map. The notation of $J_g$ was inspired in the works developed by Szeto and Xue in \cite{szetoI}, \cite{szetoII}, \cite{szetoIII} and \cite{szetoIV}, in which they treated about Galois extensions for actions of groups.

The paper is organized as follows. We start with some preliminary results about groupoids, groupoid actions, Galois extensions and separability. In Section 2 we present two induced maps by the Galois map $\si:\HH \mapsto \tet(\HH)C(R)$ and $\gamma:\HH \mapsto V_R(\tet(\HH))$  and another two maps $\overline{\si}:\overline{\HH} \mapsto \tet(\HH)C(R)$ and $\overline{\ga}:\overline{\HH} \mapsto V_R(\tet(\HH))$ induced by $\sigma$ and $\gamma$, respectively. We prove that $\overline{\sigma}$ and $\overline{\gamma}$ are both injectives if $R$ is a $\beta$-Galois extension such that $R^{\beta}$ is separable over $C(R)^{\beta}$. In Section 4 we show the relation between $\si$ and $\gamma$ and we give conditions to the Galois map be injective, which generalizes results in \cite{pataIII}, \cite{szetoIII} and \cite{szetoIV}.

Throughout, unless otherwise specified, rings (hence, also algebras) are associative and unital.

\section{Preliminary}\label{sec:preliminares}

\subsection{Groupoids}
A {\it{groupoid}} $\G$ is a small category in which every morphism is invertible. Given $g\in \G$, the {\it domain} and the {\it  range }  of $g$ will be denoted by $d(g)$ and $r(g)$, respectively. Also, $\G_0$ will denote the set of the objects of $\G$. Hence, we have maps $d,r:\G \to \G_0$.  Given $e\in \G_0$, $id_e$ will denote the identity morphism of $e$.  Observe that $id:\G_0\to \G$, given by $id(e)=id_e$, is an injective map. Thus, we identify $\G_0\subseteq \G$. 

Let $\GG=\{ (g,h) \in \G \times \G : d(g)=r(h)\}$. The map $m: \GG \to \mathcal{G}$,  $m(g,h)=gh$, is called \emph{composition map}. For each pair of objects $e,f\in \mathcal{G}_0$, denote by $\mathcal{G}(e,f)=\{g \in \G: d(g)=e, r(g)=f\}$. In particular $\mathcal{G}(e,e)=\mathcal{G}_e$ is a group, called the {\it isotropy (or principal) group associated to $e$}. 

A non-empty subset $\mathcal{H}$ of $\G$ is called a {\it subgroupoid} of $\G$ if it is stable by multiplication and by inverse. If, in addition, $\mathcal{H}_{0} = \G_{0}$, then $\mathcal{H}$ is said to be a {\it wide subgroupoid}. We will use the notation $\HH < \G$.

\subsection{Actions of groupoids}
We recall some basic results about actions of groupoids. The reference that will be used here is \cite{bagio}. Consider $R$ an algebra over a commutative ring $K$. An \emph{action} \emph{of} $\G$ \emph{over} $R$ is a pair $$\beta = (\{E_g\}_{g \in \G}, \{\beta_g\}_{g \in \G})$$ where for each $g \in \G$, $E_g = E_{r(g)}$ is an ideal of $R$ and $\beta_g: E_{g^{-1}} \to E_g$ is an isomorphism of $K$-algebras satisfying the following conditions: \begin{itemize}
\item [(i)] $\beta_e$ is the identity map $Id_{E_e}$ of $E_e$ for all $e \in \G_0$;
    \item [(ii)] $\beta_g(\beta_h(r)) = \beta_{gh}(r)$ for all $(g, h) \in \GG$ and for all $r \in E_{h^{-1}} = E_{(gh)^{-1}}$.\end{itemize}
If each $E_g$, $g\in\G$,  is a unital algebra (with identity element denoted by $1_g$) we say that the action $\beta$ is \emph{unital}.

The \emph{skew groupoid ring} $R \star_{\beta} \G$, corresponding to an action $\beta$ of a groupoid $\G$ on an algebra $R$, is defined as the direct sum $$R \star_{\beta} \G = \bigoplus_{g \in \G}E_gu_g,$$ (where each $u_g$ is a placeholder for the g-th component) with the usual addition and the multiplication induced by the rule
$$
(xu_g)(yu_h) = \left\{
\begin{array}{lll}
x\beta_g(y1_{g^{-1}})u_{gh},& \mbox{if} \quad (g, h) \in \G_2\\
0, & \mbox{otherwise,}
\end{array}
\right.
$$ for all $g, h \in \G, x \in E_g$ and $y \in E_h$. It is straightforward to check that $R \star_{\beta} \G$ is associative and, if $\G_0$ is finite, also unital, with identity element given by $1_{R \star_{\beta} \G}=\sum_{e\in\G_0}1_e\delta_e$.

\subsection{Galois extension and separability}
For any action $\beta$ of  a groupoid $\G$ on an algebra $R$ we will denote by $$R^{\beta}=\{r\in R\ |\ \beta_g(r1_{g^{-1}})=r1_g,\,\ \text{for all}\,\ g\in\G\}$$ the subalgebra of $R$ of the elements which are invariant under $\beta$.

We say that $R$ is a $\beta$-\emph{Galois extension of} $R^{\beta}$ if $\G$ is finite and there exist elements $x_i, y_i \in R$, $1 \leq i \leq m$, such that $\sum_{1 \leq i \leq m}x_i\beta_g(y_i1_{g^{-1}}) = \sum_{e \in \G_0}\delta_{e, g}1_e$, for all $g \in \G$. The set $\{x_i, y_i\}_{1\leq i \leq m}$ is called a \emph{Galois coordinate system} of $R$ over $R^{\beta}$.

Let $R\supseteq S$ be a ring extension. We say that $R$ is \emph{separable over $S$} if there exists an element $z = \sum_{i = 1}^nx_i \otimes_S y_i \in R \otimes_S R$ such that $\sum_{i = 1}^nx_iy_i = 1_R$ and $rz = zr$, for all $r\in R$ \cite{HS}. If the ring $S$ is commutative, it is equivalent to say that $R$ is a projective left $R^e$-module, where  $R^e = R \otimes_S R^o$ and $R^o$ is the opposite algebra of $R$ \cite{chase1969galois}.

\section{Induced Maps}
From now on, assume that $\G$ is a finite groupoid, $K$ is a commutative ring, $R$ is a $K$-algebra, $\beta = (\{E_g\}_{g \in \G}, \{\beta_g\}_{g \in \G})$ is a unital action of $\G$ on $R$  such that $R = \bigoplus_{e \in \G_0}E_e$, $C(R)$ is the center of $R$, $V_{S_2}(S_1) = \{r \in S_2 \mid rs = sr$, for all $s \in S_1\}$ is the commutator of $S_1$ in $S_2$, for any subrings $S_1, S_2$ of $R$ and $J_g$, $g\in\G$, is the $C(R)$-submodule of $R$ defined by $J_g = \{r \in E_g \mid r\beta_g(x1_{g^{-1}}) = xr,\, \text{for all}\,\ x \in R\}$. It follows by \cite[Proposition 2.2]{pataIII} that if $\HH$ is a subgroupoid of $\G$ and $R_{\HH} = \bigoplus_{e \in \HH_0}E_e$, then $\beta_{\HH} = \{\beta_h: E_{h^{-1}} \to E_h \mid h \in \HH\}$ is an action of $\HH$ on $R_{\HH}$ and $R_{\HH}$ is a $\beta_{\HH}$-Galois extension of $(R_{\HH})^{\beta_{\HH}}$. Hence, if the groupoid is wide, we have that $\HH$ acts on $R$ and $(R_{\HH})^{\beta_{\HH}} = R^{\beta_{\HH}}$.

We start recalling a crucial result from \cite{pataIII}.

\begin{lema}\cite[Lemma 3.1]{pataIII} \label{lem1} In the same notations above, we have that
\begin{align*}
	V_R(\R)=\bigoplus_{g \in \G} J_g.
\end{align*}
\end{lema}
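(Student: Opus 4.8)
The plan is to prove the two inclusions separately and to read off the directness of the sum from an identification of $V_R(\R)$ with a centralizer inside the skew groupoid ring. For the inclusion $\bigoplus_{g\in\G}J_g\subseteq V_R(\R)$ it suffices, since $V_R(\R)$ is a $C(R)$-submodule, to check that each $J_g$ lands in $V_R(\R)$. So I would take $r\in J_g$ and $a\in\R$, combine $\beta_g(a1_{g^{-1}})=a1_g$ (the defining property of $\R$) with the defining relation $r\beta_g(a1_{g^{-1}})=ar$ of $J_g$ to get $ra1_g=ar$; since $R=\bigoplus_{e\in\G_0}E_e$ forces the $1_e$ to be central orthogonal idempotents and $r\in E_g=E_{r(g)}$, multiplication by $1_g$ fixes $ra\in E_{r(g)}$, whence $ra=ar$ and $r\in V_R(\R)$.

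For the reverse inclusion and for directness I would pass to the map $\phi\colon R\star_{\beta}\G\to\End(R_{\R})$, $\phi(zu_g)(s)=z\beta_g(s1_{g^{-1}})$, which is the fundamental isomorphism attached to a $\beta$-Galois extension (its bijectivity being produced from the Galois coordinate system $\{x_i,y_i\}$). Writing $\iota\colon R\to R\star_{\beta}\G$, $\iota(c)=\sum_{e\in\G_0}c1_eu_e$, one checks that $\lambda_c:=\phi(\iota(c))$ is the left multiplication $s\mapsto cs$, so $\lambda(R)=\phi(\iota(R))$ with $\iota(R)=\bigoplus_{e\in\G_0}E_eu_e$. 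Now for $c\in V_R(\R)$ the right multiplication $\rho_c\colon s\mapsto sc$ is right $\R$-linear precisely because $c$ centralizes $\R$, so $\rho_c\in\End(R_{\R})$, and it visibly commutes with every $\lambda_r$; hence $\rho_c$ lies in the centralizer of $\lambda(R)$ in $\End(R_{\R})$, which under the ring isomorphism $\phi$ is the image of the centralizer of $\iota(R)$ in $R\star_{\beta}\G$.

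The crux is then to compute that centralizer. Taking $w=\sum_g z_gu_g$ and expanding $w\,\iota(a)$ and $\iota(a)\,w$ with the multiplication rule of $R\star_{\beta}\G$ (using $1_{g^{-1}}=1_{d(g)}$), I expect to obtain $w\,\iota(a)=\sum_g z_g\beta_g(a1_{g^{-1}})u_g$ and $\iota(a)\,w=\sum_g az_gu_g$; comparing $u_g$-components shows that $w$ centralizes $\iota(R)$ if and only if $z_g\beta_g(a1_{g^{-1}})=az_g$ for all $a\in R$, that is, if and only if $z_g\in J_g$ for every $g$. Thus the centralizer of $\iota(R)$ is exactly $\bigoplus_g J_gu_g$. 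Transporting back by $\phi$, and using that $z_g\in J_g$ is equivalent to $\phi(z_gu_g)=\rho_{z_g}$ together with $\sum_g\rho_{z_g}=\rho_{\sum_g z_g}$, one gets $\rho_c=\rho_{\sum_g z_g}$, hence $c=\sum_g z_g$ with $z_g\in J_g$, proving $V_R(\R)\subseteq\sum_g J_g$. Directness is then immediate from injectivity of $\phi$: if $\sum_g z_g=0$ with $z_g\in J_g$, then $\phi(\sum_g z_gu_g)=\rho_0=0$ forces $\sum_g z_gu_g=0$ in the direct sum $\bigoplus_g E_gu_g$, so each $z_g=0$. I expect the main obstacle to be justifying the isomorphism $\phi$ (in particular its surjectivity) from the Galois coordinate system, rather than the centralizer computation, which is a direct unwinding of the definitions.
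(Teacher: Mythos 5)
Your proof is correct; note, however, that the paper itself contains no proof of this lemma at all --- it is recalled verbatim from \cite{pataIII}*{Lemma 3.1} --- so the comparison is necessarily with the standard coordinate-based argument rather than with anything in this text. Your route is genuinely different and entirely legitimate: you transport the statement to the skew groupoid ring via the canonical isomorphism $\phi\colon R\star_{\beta}\G\to \End_{R^{\beta}}(R)$, $\phi(zu_g)(s)=z\beta_g(s1_{g^{-1}})$, which is precisely the content of \cite{bagio}*{Theorem 5.3} that this paper already invokes in \autoref{lem2} and \autoref{lem3}, so citing it rather than reproving it is consistent with the paper's own toolkit. All the individual steps check out: the $1_e$ are indeed central orthogonal idempotents (a unital two-sided ideal has central identity), which makes the easy inclusion $\bigoplus_g J_g\subseteq V_R(R^{\beta})$ go through; the commutant of $\iota(R)$ in $R\star_{\beta}\G$ is exactly $\bigoplus_g J_gu_g$ by your componentwise computation (using $1_{g^{-1}}=1_{d(g)}$ and that both $az_g$ and $z_g\beta_g(a1_{g^{-1}})$ lie in $E_g$); and the identification $\phi(z_gu_g)=\rho_{z_g}$ for $z_g\in J_g$ then yields the hard inclusion by evaluating $\rho_c=\rho_{\sum_g z_g}$ at $1_R$, with directness falling out of injectivity of $\phi$. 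By contrast, the usual direct proof (in the spirit of Kanzaki and Szeto--Xue, and of \cite{pataIII}) manufactures the components explicitly from the Galois coordinates, e.g.\ $z_g=\sum_i x_ic\,\beta_g(y_i1_{g^{-1}})$ for $c\in V_R(R^{\beta})$, and verifies $z_g\in J_g$ and $\sum_g z_g=c$ by hand, using the identity $r\bigl(\sum_i x_i\otimes y_i\bigr)=\bigl(\sum_i x_i\otimes y_i\bigr)r$ in $R\otimes_{R^{\beta}}R$; that version avoids naming the endomorphism ring but must argue directness separately. What your approach buys is conceptual transparency --- $\bigoplus_g J_g$ appears as a commutant inside $R\star_{\beta}\G$, and the direct-sum decomposition is inherited from the grading of the skew groupoid ring --- at the cost of using the full strength (surjectivity and injectivity) of the isomorphism $\phi$.
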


Next we present some useful properties about the commutator.

\begin{prop}\label{prop1}
	Assume $S_1, S_2$ subrings of $R$. Then:
	\begin{itemize}
		\item [(i)] $V_R(S_1) = V_R(S_1C(R))$;
		\item [(ii)] If $V_R(S_1)=V_R(S_2)$, then $V_R(S_1C(R))=V_R(S_2C(R))$;
		\item [(iii)] If $S_1 \subseteq C(R)$, then $V_R(S_1)=R$.
	\end{itemize}
\end{prop}
\begin{proof}
	$(i)$ Consider $r \in V_R(S_1)$. Then $rs=sr$, for all $s \in S_1$. Given $s'c \in S_1C(R)$, 
	\begin{align*}
	r(s'c)=r(cs')=(rc)s'=(cr)s'=c(rs')=c(s'r)=(cs')r=(s'c)r.
	\end{align*}
	Therefore, $r \in V_R(S_1 C(R))$. The reciprocal is immediate, since $S_1 \subseteq S_1C(R)$.

	$(ii)$ Let $r \in V_R(S_1C(R))$. Then $r(sc)=(sc)r$, where $sc \in S_1C(R)$. Notice that $r(sc)=r(cs)=(cr)s$ and $(sc)r=s(cr)$, once $c \in C(R)$. Thus $(cr)s=s(cr)$, whence $cr \in V_R(S_1)$. Since $V_R(S_1)=V_R(S_2)$, then $cr \in V_R(S_2)$, that is, $(cr)s'=s'(cr)$ for all $s' \in S_2$. Hence $(cr)s'=(rc)s'=rcs'=r(s'c)$ and $s'(cr)=(s'c)r$. Therefore $r \in V_R(S_1C(R))$. The reciprocal is analogous. 
	
	$(iii)$ It is immediate.
\end{proof}

For the rest of the paper, given a wide subgroupoid $\HH$ of $\G$, we will fix the notation $\tet:\HH \mapsto R^{\bet_{\HH}}$ for the Galois map from the set of the wide subgroupoids of $\G$ to the set of $R^{\beta}$-separable algebras of $R$. 

Now suppose that $R$ is a $\bet$-Galois extension of $R^{\bet}$ such that $R^{\bet}$ is separable over $C(R)^{\bet}$. For any subgroupoid $\HH$ of $\G$, let $S_\HH=\{g \in \HH \, | \, J_g \neq \{0\}\}$, $T_\HH=\{g \in \HH \, | \, J_g = \{0\}\}$ and $\overline{\HH}=\{\L \, | \, \L < \G \,  \, \, \textnormal{and} \, \, S_\L=S_\HH\}$. Define two maps induced by $\tet$: $\si:\HH \mapsto \tet(\HH)C(R) $ and $\gamma:\HH \mapsto V_R(\tet(\HH))$.

We want to show that $\overline{\si}:\overline{\HH} \mapsto \tet(\HH)C(R)$ and $\overline{\ga}:\overline{\HH} \mapsto V_R(\tet(\HH))$ are injective maps from $\mathcal{A}=\{\ \overline{\HH} \, | \, \HH < \G \text{ and } \HH_0 = \G_0 \}$ to the set $\mathcal{B}$ of the separable subalgebras of $R$ over $C(R)$. For this, we need first to prove some results.

\begin{lema}\label{lem2}
	Let $R$ be a $\bet$-Galois extension of $R^{\bet}$. If $\R$ is separable over $\cbet$, then $R \star_{\bet} \G$ is also separable over $\cbet$.
\end{lema}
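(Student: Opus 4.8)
The plan is to exploit the transitivity of separability along the chain of unital subrings
\[
\cbet \subseteq \R \subseteq R \subseteq R \star_{\bet} \G,
\]
where the first inclusion holds because $\cbet = \C^{\bet} \subseteq R^{\bet} = \R$ (the invariance conditions defining the two coincide), and the last is the canonical embedding $a \mapsto \sum_{e \in \G_0}(a1_e)u_e$, under which the common identity $1_R = \sum_{e \in \G_0}1_e$ goes to $1_{R\star_{\bet}\G} = \sum_{e \in \G_0}1_e u_e$. Recall that separability is transitive: if $A \supseteq B \supseteq C$ are unital ring extensions admitting separability elements $\sum_k a_k \otimes_B a_k'$ for $A/B$ and $\sum_l b_l \otimes_C b_l'$ for $B/C$, then $\sum_{k,l} a_k b_l \otimes_C b_l' a_k'$ is a separability element for $A/C$. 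Since the hypothesis already provides separability of $\R$ over $\cbet$, it suffices to establish the two remaining links, namely that $R$ is separable over $\R$ and that $R \star_{\bet} \G$ is separable over $R$; applying transitivity twice then yields separability of $R \star_{\bet} \G$ over $\cbet$.

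For the link $R/\R$, I would take $z = \sum_{1 \le i \le m} x_i \otimes_{\R} y_i$ built from a Galois coordinate system $\{x_i, y_i\}$. Specialising the Galois relation $\sum_i x_i \beta_g(y_i 1_{g^{-1}}) = \sum_{e \in \G_0}\delta_{e,g}1_e$ to $g = e \in \G_0$ and summing over $\G_0$ gives $\sum_i x_i y_i = 1_R$, so $z$ multiplies to $1_R$; that $z$ is $R$-central is the standard fact that a $\bet$-Galois extension is separable over its ring of invariants. For the link $(R\star_{\bet}\G)/R$, I would take
\[
w = \sum_{1 \le i \le m}\sum_{g \in \G}(x_i 1_g)u_g \otimes_R (y_i 1_{g^{-1}})u_{g^{-1}}.
\]
Multiplying a single summand by the skew rule gives $(x_i 1_g)\beta_g(y_i 1_{g^{-1}})u_{r(g)} = x_i \beta_g(y_i 1_{g^{-1}})u_{r(g)}$, and after summing over $i$ and $g$ the Galois relation collapses everything to $\sum_{e \in \G_0}1_e u_e = 1_{R\star_{\bet}\G}$, so $w$ likewise multiplies to the identity.

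The main obstacle is verifying centrality, that is, $\delta\, w = w\, \delta$ for every $\delta \in R \star_{\bet} \G$; by additivity it is enough to check this on the elements $\delta = a u_h$ with $h \in \G$ and $a \in E_h$. Here one must use axiom (ii) of the action, $\beta_g \beta_h = \beta_{gh}$, in tandem with the Galois relation, while keeping careful track of the local identities $1_g$ and of the partiality of the composition in $\G$ (summands with $d(g) \neq r(h)$ vanish), which is where the bulk of the bookkeeping lies. Once the centrality of $w$ is confirmed (the corresponding property of $z$ being the standard separability of a Galois extension over its invariants), the two elements are fed into the transitivity formula of the first paragraph, first to obtain a separability element of $R \star_{\bet} \G$ over $\R$ and then, together with the hypothesis element, one over $\cbet$, which finishes the proof.
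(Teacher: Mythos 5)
Your overall strategy (transitivity of separability along $\cbet \subseteq \R \subseteq R \subseteq R \star_{\bet} \G$) is legitimate in principle: the transitivity formula you quote is correct, the inclusion $\cbet\subseteq\R$ and the embedding of $R$ in $R\star_{\bet}\G$ are as you say, and the first two links are fine. The fatal problem is the third link, which is exactly the step you defer (``once the centrality of $w$ is confirmed''): your element $w$ commutes with $R$ but \emph{not} with all of $R\star_{\bet}\G$, and no amount of bookkeeping will fix this. Already in the one-object case ($\G$ a finite group, all $1_g=1_R$), moving the coefficients $y_i$ across $\otimes_R$ and using the Galois relation $\sum_i x_i\bet_g(y_i)=\delta_{g,e}1_R$ gives, for any $h\neq e$,
\begin{align*}
u_h w &= \sum_{i,g} \bet_h(x_i)\bet_{hg}(y_i)\,u_{hg}\otimes_R u_{g^{-1}} \;=\; u_h\otimes_R 1, \\
w\, u_h &= \sum_{i,g} x_i\bet_g(y_i)\,u_g\otimes_R u_{g^{-1}h} \;=\; 1\otimes_R u_h.
\end{align*}
Since $R\star_{\bet}\G$ is free as a right $R$-module on the $u_g$, the module $(R\star_{\bet}\G)\otimes_R(R\star_{\bet}\G)$ is free as a right $R\star_{\bet}\G$-module on $\{u_g\otimes 1\}$, so $u_h\otimes_R 1\neq 1\otimes_R u_h$ for $h\neq e$. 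Hence $w$ is not a separability element.

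The gap is not merely in the choice of element: separability of $R\star_{\bet}\G$ over $R$ is not a formal consequence of the Galois hypothesis at all. Writing a general element of $(R\star_{\bet}\G)\otimes_R(R\star_{\bet}\G)$ in the basis above, one checks (already for groups) that a separability element over $R$ exists if and only if there are elements $b_g\in J_g$ with $\sum_g \bet_g\bigl(b_{g^{-1}kg}\bigr)=\delta_{k,e}1_R$ for all $k$; in particular one needs a central element $b\in C(R)$ of trace $1$. Conditions of exactly this flavor ($J_g\neq\{0\}$, Hirata separability, trace-one elements) are the \emph{extra} hypotheses studied in this circle of papers, not consequences of being $\bet$-Galois. (Under the hypotheses of the lemma the intermediate claim does turn out to be true, but only as a corollary of the lemma itself together with \cite[Proposition 2.5 (1)]{HS}, so it cannot serve as a stepping stone.) The paper's proof sidesteps all of this: by \cite[Theorem 5.3]{bagio}, $R$ is a finitely generated projective right $\R$-module with $R\star_{\bet}\G\simeq \End_{\R}(R,R)$, and Kanzaki's theorem \cite[Theorem 1]{kan} then yields directly that this endomorphism ring is separable over $\cbet$, which is precisely where the hypothesis that $\R$ is separable over $\cbet$ enters.
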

\begin{proof}
	Since $R$ is a $\bet$-Galois extension of $R^{\bet}$, by \cite[Theorem 5.3]{bagio}, $R$ is a finitely generated projective right $\R$- module and $End_{\R}(R,R) \simeq R \star_{\bet} \G$. Thus, by \cite[Theorem 1]{kan}, $End_\R(R,R)$ is a separable algebra over $\cbet$. Consequently, $R \star_{\bet} \G$ is separable over $\cbet$.
\end{proof}

\begin{lema}\label{lem3}
	Let $R$ be a $\bet$-Galois extension of $R^{\bet}$  such that $R^{\bet}$ is separable over $C(R)^{\bet}$. Then $\Rh$ is separable over $\cbet$, for each wide subgroupoid $\HH$ of $\G$, where $\bet_\HH=\{\bet_h:E_{h^{-1}} \seta E_h \, | \, h \in \HH\}$ is an action of $\HH$ on $R$.
\end{lema}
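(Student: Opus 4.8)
The plan is to deduce the statement from transitivity of separable extensions along the tower
\[
\cbet \subseteq \R \subseteq \Rh .
\]
The two inclusions are immediate: any element fixed by the whole action $\bet$ is in particular fixed by the smaller family $\bet_\HH$, so $\R \subseteq \Rh$, and $\cbet = C(R)\cap\R \subseteq \R$ by definition. The bottom step of the tower, that $\R$ is separable over $\cbet$, is exactly the hypothesis, so everything reduces to the top step: once I know that $\Rh$ is separable over $\R$, transitivity of separability (as in \cite{HS}) gives that $\Rh$ is separable over $\cbet$, which is the claim.

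For the top step I would first observe that, since $\HH$ is a wide subgroupoid of $\G$, the restriction $\bet_\HH$ is genuinely an action of $\HH$ on $R$ and $R$ is a $\bet_\HH$-Galois extension of $\Rh$, by \cite[Proposition 2.2]{pataIII}. In particular $\Rh$ is the value $\tet(\HH)$ of the Galois map at $\HH$. Since the Galois map in this noncommutative groupoid setting is known to take values among the $\R$-separable subalgebras of $R$ \cite{pataIII}, it follows that $\Rh$ is separable over $\R$, which is precisely the top step.

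Assembling the two steps is then formal: if $e\in\Rh\otimes_{\R}\Rh$ is a separability element for $\Rh$ over $\R$ and $f\in\R\otimes_{\cbet}\R$ one for $\R$ over $\cbet$, the usual composite of $e$ and $f$ is a separability element for $\Rh$ over $\cbet$, all the required bimodule identities being preserved because $\cbet$ is central in $R$.

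The step I expect to be the main obstacle is the top one, that $\Rh$ is separable over $\R$. The cleanest route is to quote it from the Galois correspondence, but proving it directly is delicate, since it concerns an intermediate ring of a Galois extension rather than the extension itself: one would have to produce a separability element in $\Rh\otimes_{\R}\Rh$, for instance through a descent argument exploiting that $R$ is separable over $\R$ together with the fact that $\Rh$ is a direct summand of $R$ as an $\Rh$-bimodule via the trace map of the $\bet_\HH$-action. One should also check that transitivity of separability is being applied legitimately here, where the middle ring $\R$ need not be commutative even though the base $\cbet$ is.
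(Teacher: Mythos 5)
Your tower strategy is sound in its formal skeleton: the inclusions $\cbet \subseteq \R \subseteq \Rh$ are correct, and transitivity of separability (\cite[Proposition 2.5 (2)]{HS}) does hold with a noncommutative middle ring, as the paper itself uses elsewhere. The genuine gap is your ``top step,'' the claim that $\Rh$ is separable over $\R$. You justify it by saying that the Galois map ``is known to take values among the $\R$-separable subalgebras of $R$,'' citing \cite{pataIII}. No such theorem is available in this generality: for noncommutative $\bet$-Galois extensions, separability of $\Rh$ over $\R$ is precisely one of the assertions that can fail and that \cite{pataIII} and the Szeto--Xue papers establish only under extra hypotheses (central $\bet$-Galois algebras, $J_g \neq \{0\}$ for all $g$, etc.). The sentence in the present paper fixing $\tet$ as a map ``to the set of $R^{\bet}$-separable algebras of $R$'' is a description of the intended target, not a quotable result. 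Worse, under the hypotheses of the lemma your top step is logically equivalent to the lemma itself: if the lemma holds, then $\Rh$ separable over $\cbet$ together with $\cbet \subseteq \R \subseteq \Rh$ gives $\Rh$ separable over $\R$ by \cite[Proposition 2.5 (1)]{HS}, and conversely your argument derives the lemma from the top step by \cite[Proposition 2.5 (2)]{HS}. So assuming the top step is, in effect, assuming the conclusion; this is exactly why the paper does not argue this way.

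For comparison, the paper's proof never goes through separability of $\Rh$ over $\R$ at all: it decomposes the skew groupoid ring $R \star_{\bet} \G$ into cosets $\bigoplus_{i}(R \star_{\bet_\HH}\HH)u_{g'_i}$, deduces from \autoref{lem2} (which rests on Kanzaki's theorem \cite[Theorem 1]{kan} applied to $\End_{\R}(R,R) \simeq R\star_\bet\G$) that $R \star_{\bet_\HH}\HH$ is separable over $\cbet$ via a direct-summand/projectivity argument, and then transfers separability to $\Rh \simeq \End_{R\star_{\bet_\HH}\HH}(R,R)$ by applying Kanzaki's theorem a second time. Your fallback sketch, a descent argument using that $\Rh$ is an $\Rh$-bimodule direct summand of $R$ via the trace of $\bet_\HH$, hits the same kind of unproved input: for noncommutative Galois extensions the existence of a trace-one element (equivalently, the bimodule splitting you need) is not automatic, so this route also requires a substantive new argument rather than a citation. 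As it stands, the proposal reduces the lemma to an equivalent unproved statement, and so has a genuine gap at its central step.
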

\begin{proof}
Since $R^{\bet}$ is separable over $C(R)^{\bet}$, by \autoref{lem2}, $R \star_{\bet} \G$ is separable over $\cbet$. Observe that $\G=\dot{\cup} \, g\HH$, where $g\HH=\{gh \, | \, h \in \HH \, \textnormal{and} \, d(g)=r(h)\}$. Therefore, $\G=g_1\HH + \cdots + g_n\HH=\HH g'_1 + \cdots + \HH g'_n$ with $g_1=d(g_1)$ and $g'_1=r(g'_1)$.
	\\
	\textbf{Step 1:} $R \star_{\bet} \G=R \star_{\bet_\HH}\HH \oplus \sum_{i=2}^n (R \star_{\bet_\HH} \HH)u_{g'_i} = \bigoplus_{i=1}^n (R \star_{\bet_\HH} \HH)u_{g'_i}$.
	\\
	($\subseteq$) Given $g \in \G$, there exists $h \in \HH$ such that  $g=hg'_i$ with $d(h)=r(g'_i)$ and some $1 \leq i \leq n$. Thus, given $\sum_{g \in \G}x_gu_g \in R \star_{\bet} \G$,
	\begin{align*}
	\sum_{g \in \G}x_gu_g &=\sum_{i=1}^n\sum_{\substack{h \in \HH, \\ d(h) = r(g'_i)}} x_{hg'_i}u_{hg'_i}=\sum_{\substack{h \in \HH, \\ d(h) = r(g'_1)}}x_{hg'_1}u_{hg'_1}+ \sum_{i=2}^n\sum_{\substack{h \in \HH, \\ d(h) = r(g'_i)}} x_{hg'_i}u_{hg'_i} \\
	&=\sum_{\substack{h \in \HH, \\ d(h) = r(g'_i)}}x_{hr(g'_1)}u_{hr(g'_1)}+\sum_{i=2}^n\sum_{\substack{h \in \HH, \\ d(h) = r(g'_i)}} x_{hg'_i}u_{hg'_i} \\
	&=\sum_{\substack{h \in \HH, \\ d(h) = r(g'_i)}}x_{hd(h)}u_{hd(h)}+\sum_{i=2}^n\sum_{\substack{h \in \HH, \\ d(h) = r(g'_i)}} x_{hg'_i}u_{hg'_i} \\
	&=\sum_{\substack{h \in \HH, \\ d(h) = r(g'_i)}}x_{h}u_{h}+\sum_{i=2}^n\sum_{\substack{h \in \HH, \\ d(h) = r(g'_i)}} x_{hg'_i}u_{h}u_{g'_i} \in R \star_{\bet_\HH}\HH + \sum_{i=2}^n (R \star_{\bet_\HH} \HH)u_{g'_i}.
	\end{align*}
	\\
	($\supseteq$) Since $R \star_{\bet_\HH} \HH \subseteq R \star_{\bet} \G$, it follows that $R \star_{\bet_\HH}\HH \oplus \sum_{i=2}^n (R \star_{\bet_\HH} \HH)u_{g'_i} \subseteq R \star_{\bet}\G$.
	
	The last equality of the claim is immediate. Furthermore, it is not difficult to see that $\sum_{i=1}^n(R \star_{\bet_\HH} \HH)u_{g'_i}=\sum_{i=1}^nu_{g_i}(R \star_{\bet_\HH} \HH).$ Then $R \star_{\bet} \G=R \star_{\bet_\HH}\HH \oplus \sum_{i=2}^n (R \star_{\bet_\HH} \HH)u_{g'_i}=R \star_{\bet_\HH}\HH \oplus \sum_{i=2}^n u_{g_i}(R \star_{\bet_\HH} \HH)$, and, consequently, $\bigoplus_{i=1}^n(R \star_{\bet_\HH}\HH)u_{g'_i}=\bigoplus_{i=1}^nu_{g_i}(R \star_{\bet_\HH}\HH)$. 
	\\
	\textbf{Step 2:} $R \star_{\bet_\HH}\HH$ is separable over $\cbet$.
	\\
	Clearly, $R \star_{\bet_\HH}\HH$ is a subring of $R \star_{\bet}\G$. Once $R \star_{\bet}\G$ is separable over $C(R)^\bet$, $R \star_{\bet}\G$ is a projective left $(R \star_{\bet}\G)^e$- module. 
	Moreover,
	\begin{align*}
	(R \star_{\bet}\G)^e &= R \star_{\bet}\G \otimes_{C(R)^\bet} (R \star_{\bet}\G)^o  \\ 
	&=  \bigoplus_{i=1}^n (R \star_{\bet_\HH}\HH)u_{g'_i} \otimes_{C(R)^\bet} (\bigoplus_{i=1}^n (R \star_{\bet_\HH}\HH)u_{g'_i})^o  \\
	&=  \bigoplus_{i=1}^n (R \star_{\bet_\HH}\HH)u_{g'_i} \otimes_{C(R)^\bet} \bigoplus_{i=1}^n (R \star_{\bet_\HH}\HH)^o u_{g'_i}^o \\
		&= \bigoplus_{i=1}^n ((R \star_{\bet_\HH}\HH) \otimes_{C(R)^\bet} (R \star_{\bet_\HH}\HH)^{o})(u_{g'_i} \otimes u_{g'_i}^o) \\
	&= \bigoplus_{i=1}^n (R \star_{\bet_\HH}\HH)^{e}(u_{g'_i} \otimes u_{g'_i}^o).
	\end{align*}
	Thus, $R \star_{\bet}\G$ is a projective left $(R \star_{\bet_\HH}\HH)^{e}$- module. Furthermore, since $R \star_{\bet_\HH}\HH$ is a direct summand of $R \star_{\bet}\G$ as a left $(R \star_{\bet_\HH}\HH)^{e}$-module, we have that $R \star_{\bet_\HH}\HH$ is a projective left $(R \star_{\bet_\HH}\HH)^{e}$- module. Thus, $R \star_{\bet_\HH}\HH$ is separable over $C(R)^{\beta}$.

	Since $R$ is a $\bet$-Galois extension of $R^{\bet}$, by \cite[Theorem 5.3]{bagio} $R$ is a finitely generated right $\R$-module and by $\R$ being separable over $\cbet$, by \cite[Theorem 1]{kan} we have that $R$ is a finitely generated right $End_{\R}(R,R) \simeq R \star_{\bet} \G$- module. Notice that $R \star_{\bet} \G$ is a free $R \star_{\bet_\HH} \HH$-module, because $R \star_{\bet_\HH} \HH \subseteq R \star_{\bet} \G$ and $1_{R \star_{\bet_\HH} \HH}=1_{R \star_{\bet} \G}$. Thus $R \star_{\bet} \G$ is a finitely generated projective $R \star_{\bet} \HH$-module. Then $R$ is a finitely generated projective $R \star_{\bet} \HH$-module. Thereby, by Step 2 and by \cite[Theorem 1]{kan}, $End_{R \star_{\bet_\HH}\HH}(R,R)$ is separable over $\cbet$. Once $End_{R \star_{\bet_\HH}\HH}(R,R) \simeq R^{\bet_\HH}$ as rings, we have that $R^{\bet_\HH}$ is separable over $\cbet$.
\end{proof}

\begin{lema}\label{lem4}
	Let $R$ be a $\bet$-Galois extension of $R^{\bet}$ such that $R^{\bet}$ is separable over $C(R)^{\bet}$. Then $\gaa:\overline{\HH} \seta V_R(\tet(\HH))$, for $\HH$ subgroupoid of $\G$, is well-defined. 
\end{lema}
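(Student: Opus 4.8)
The plan is to reduce the well-definedness of $\gaa$ to the single statement that the value $V_R(\tet(\HH))$ depends on the subgroupoid $\HH$ only through the set $S_\HH$. We work throughout with wide subgroupoids, as these are the ones parametrizing $\mathcal{A}$. By definition of the class $\overline{\HH}$, two wide subgroupoids $\HH$ and $\L$ give the same element of $\mathcal{A}$ precisely when $S_\HH = S_\L$; hence I must show that $S_\L = S_\HH$ forces $V_R(R^{\bet_\L}) = V_R(R^{\bet_\HH})$, which is exactly the condition for $\overline{\HH} \mapsto V_R(\tet(\HH))$ to be independent of the chosen representative.

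First I would express each commutator as a direct sum of the modules $J_h$. Since $\HH$ is wide, by \cite[Proposition 2.2]{pataIII} the restricted action $\bet_\HH$ turns $R$ into a $\bet_\HH$-Galois extension of $R^{\bet_\HH}$, so \autoref{lem1} applies to $\bet_\HH$ and gives $V_R(R^{\bet_\HH}) = \bigoplus_{h \in \HH} J_h$. The point that must be checked here is that the module $J_h$ associated with $h \in \HH$ is the same whether it is computed for $\bet$ or for $\bet_\HH$: in both cases it equals $\{r \in E_h \mid r\bet_h(x1_{h^{-1}}) = xr \text{ for all } x \in R\}$, because $\bet_h$ is left unchanged by restriction and $R_\HH = R$ for a wide subgroupoid. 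This identification is the main thing to get right; although it is a routine unwinding of definitions, everything downstream relies on it.

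Next I would discard the vanishing terms using the splitting $\HH = S_\HH \,\dot{\cup}\, T_\HH$. By the definitions of $S_\HH$ and $T_\HH$ we have $J_h = \{0\}$ for every $h \in T_\HH$, so the direct sum collapses to $V_R(R^{\bet_\HH}) = \bigoplus_{h \in S_\HH} J_h$. This exhibits $V_R(\tet(\HH))$ as an object built only from the family $\{J_h\}_{h \in S_\HH}$, hence depending only on $S_\HH$.

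Finally, if $\L$ is a wide subgroupoid with $S_\L = S_\HH$, the same computation yields $V_R(\tet(\L)) = \bigoplus_{l \in S_\L} J_l = \bigoplus_{h \in S_\HH} J_h = V_R(\tet(\HH))$, which establishes that $\gaa$ is well-defined. The only genuine obstacle is the identification of the $J_h$ across the two actions; once that is settled, the argument is just \autoref{lem1} together with bookkeeping over the partition of $\HH$ into $S_\HH$ and $T_\HH$.
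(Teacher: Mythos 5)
Your proposal is correct and follows essentially the same route as the paper: both apply \autoref{lem1} to the restricted action $\bet_\HH$, collapse $\bigoplus_{h \in \HH} J_h$ to $\bigoplus_{h \in S_\HH} J_h$ since $J_h = \{0\}$ for $h \in T_\HH$, and conclude from $S_\HH = S_\L$. Your explicit check that $J_h$ is the same module whether computed for $\bet$ or for $\bet_\HH$ (using that $R_\HH = R$ for wide $\HH$) is a point the paper leaves implicit, but it does not change the argument.
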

\begin{proof}
	Take $\L \in \overline{\HH}$. We are going to prove that $V_R(\tet(\L))=V_R(\tet(\HH))$. 
	
	Since $\L \in \overline{\HH}$, $S_\L=S_\HH$. Then, $\bigoplus_{h \in S_\HH}J_h=\bigoplus_{g \in S_\L}J_g$. But $\bigoplus_{h \in \HH}J_h=\bigoplus_{h \in S_\HH}J_h$ and $\bigoplus_{g \in \L}J_g=\bigoplus_{g \in S_\L}J_g$. Therefore, $\bigoplus_{h \in \HH}J_h=\bigoplus_{g \in \L}J_g$ and, consequently, by \autoref{lem1}, $V_R(R^{\bet_\L})=V_R(R^{\bet_\HH})$. Thereby, $V_R(\tet(\L))=V_R(\tet(\HH))$.
\end{proof}

\begin{lema} \label{lem5}
	Let $R$ be a $\bet$-Galois extension of $R^{\bet}$ such that $R^{\bet}$ is separable over $C(R)^{\bet}$. Then $\sii:\overline{\HH} \seta \tet(\HH)C(R)$, for $\HH$ wide subgroupoid of $\G$, is well-defined. 
\end{lema}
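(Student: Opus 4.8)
The plan is to reduce the well-definedness of $\sii$ to the commutator equality already provided by \autoref{lem4}, followed by a double-centralizer step. Concretely, take $\L \in \overline{\HH}$; since $\overline{\si}(\overline{\HH}) = \tet(\HH)C(R)$ is supposed to be independent of the representative, I must show that $\tet(\L)C(R) = \tet(\HH)C(R)$, that is, $R^{\bet_{\L}}C(R) = R^{\bet_{\HH}}C(R)$.

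First I would invoke \autoref{lem4}: because $\L \in \overline{\HH}$ we have $S_\L = S_\HH$, and that lemma yields $V_R(\tet(\L)) = V_R(\tet(\HH))$, i.e. $V_R(R^{\bet_{\L}}) = V_R(R^{\bet_{\HH}})$. Applying \autoref{prop1}(ii) with $S_1 = R^{\bet_{\L}}$ and $S_2 = R^{\bet_{\HH}}$ then upgrades this to
\[
V_R\bigl(R^{\bet_{\L}}C(R)\bigr) = V_R\bigl(R^{\bet_{\HH}}C(R)\bigr).
\]
Taking commutators once more gives $V_R\bigl(V_R(R^{\bet_{\L}}C(R))\bigr) = V_R\bigl(V_R(R^{\bet_{\HH}}C(R))\bigr)$, so it suffices to establish the double-centralizer identity $V_R(V_R(A)) = A$ for $A = R^{\bet_{\HH}}C(R)$ (and likewise for $\L$); the two inner algebras then coincide and the proof is complete.

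To license that identity I would first check that $R$ is a central separable (Azumaya) algebra over $C(R)$. Indeed, $\G_0$ is a wide subgroupoid with $R^{\bet_{\G_0}} = R$ (the identities act trivially), so \autoref{lem3} gives that $R$ is separable over $\cbet$; since $\cbet \subseteq C(R) \subseteq R$, separability passes to the intermediate ring $C(R)$, and as $C(R)$ is the center of $R$ this makes $R$ Azumaya over $C(R)$. Next, \autoref{lem3} shows $R^{\bet_{\HH}}$ is separable over $\cbet$; base change along the commutative extension $\cbet \subseteq C(R)$ makes $R^{\bet_{\HH}} \otimes_{\cbet} C(R)$ separable over $C(R)$, and $A = R^{\bet_{\HH}}C(R)$, being the image of this algebra under the multiplication map, is a separable $C(R)$-subalgebra of $R$ containing $C(R)$ (it is a homomorphic image of a separable algebra, and contains $1 \cdot C(R)$). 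For such a subalgebra of an Azumaya algebra, the commutator theorem for central separable algebras yields $V_R(V_R(A)) = A$, which is precisely what the previous paragraph requires; the same argument applies to $R^{\bet_{\L}}C(R)$.

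I expect the main obstacle to be exactly the verification that $A = R^{\bet_{\HH}}C(R)$ is genuinely separable over $C(R)$ together with the Azumaya property of $R$, since this is what justifies the double-centralizer cancellation; the reductions via \autoref{lem4} and \autoref{prop1}(ii) are then purely formal. One should also confirm that the homomorphic-image step produces a \emph{unital} $C(R)$-subalgebra so that the commutator theorem applies verbatim, and that \autoref{lem3} is indeed available for the comparison subgroupoid $\L$ as well, which it is provided $\L$ is taken to be a wide subgroupoid in $\overline{\HH}$, as is the case for the representatives parametrizing $\mathcal{A}$.
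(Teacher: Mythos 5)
Your proposal is correct and follows essentially the same route as the paper: reduce via \autoref{lem4} and \autoref{prop1} to equality of commutators of $\tet(\HH)C(R)$ and $\tet(\L)C(R)$, show these algebras are separable over $C(R)$ and that $R$ is Azumaya over $C(R)$, and conclude by the double-centralizer theorem for Azumaya algebras. The only (harmless) differences are cosmetic: you obtain separability of $R$ over $\cbet$ by applying \autoref{lem3} to the wide subgroupoid $\G_0$, where the paper instead uses that a $\bet$-Galois extension is separable over $\R$ together with transitivity of separability, and you establish separability of $\tet(\HH)C(R)$ over $C(R)$ abstractly by base change plus passage to a homomorphic image, where the paper writes out the resulting separability idempotent $\sum_i x_i1_R \otimes_{C(R)} y_i1_R$ and verifies it by hand.
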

\begin{proof}
	Take $\L \in \overline{\HH}$. We are going to prove that $\tet(\HH)C(R)=\tet(\L)C(R)$. Since $\L \in \overline{\HH}$, by \autoref{lem4}, $V_R(\tet(\L))=V_R(\tet(\HH))$, and by \autoref{prop1}$(i)$, $V_R(\tet(\L)C(R))\!=\!V_R(\tet(\HH)C(R))$. From \autoref{lem3}, $R^{\bet_\L}=\tet(\L)$ and $R^{\bet_\HH}=\tet(\HH)$ are both separable over $C(R)^\bet$. 
	\\
	\textbf{Claim:} $\tet(\HH)C(R)$ and $\tet(\L)C(R)$ are both separable over $C(R)$.
	\\
	We are going to check that $\tet(\HH)C(R)$ is separable over $C(R)$. Since $\tet(\HH)$ is separable over $\cbet$, there exists $e=\sum_{i=1}^n x_i \otimes y_i \in \tet(\HH) \otimes_{\cbet} \tet(\HH)$ such that $\sum_{i=1}^n x_iy_i=1_R \, \textnormal{and} \, \sum_{i=1}^n rx_i \otimes y_i=\sum_{i=1}^n x_i \otimes ry_i$ for all $r \in R^{\bet_\HH}$. Take $d=\sum_{i=1}^n x_i1_R \otimes_{C(R)} y_i1_R \in \tet(\HH) C(R) \otimes_{C(R)} \tet(\HH) C(R)$. Thus, 
	\begin{align*}
	\sum_{i=1}^n (x_i1_R)(y_i1_R)=\sum_{i=1}^n(x_iy_i)1_R=\sum_{i=1}^n x_iy_i=1_R.
	\end{align*}
	Now, note that $\tet(\HH) C(R) \otimes_{C(R)} \tet(\HH) C(R)$ is a left $C(R)$-module via $c \cdot (xa \otimes_{C(R)} yb)=c(xa) \otimes_{C(R)} yb=x(ca) \otimes_{C(R)} yb$, for any  $x,y \in \tet(\HH)$ and $c, a, b \in C(R)$. Thus, given $xc \in \tet(\HH) C(R)$
	\begin{align*}
	\sum_{i=1}^n (xc)x_i1_R \otimes_{C(R)} y_i1_R&= \sum_{i=1}^n c(xx_i)1_R \otimes_{C(R)} y_i1_R \\
	&=(c \otimes_{C(R)} 1_R) \sum_{i=1}^n xx_i1_R \otimes_{C(R)} y_i1_R \\
	&= (c \otimes_{C(R)} 1_R) \sum_{i=1}^n x_i1_R \otimes_{C(R)} xy_i1_R \\
	&= \sum_{i=1}^n cx_i1_R \otimes_{C(R)} xy_i1_R \\
	&= \sum_{i=1}^n x_i1_R \otimes_{C(R)} c(xy_i1_R) \\
	&=\sum_{i=1}^n x_i1_R \otimes_{C(R)} (xc)y_i1_R.
	\end{align*}
	Thereby, $\tet(\HH)C(R)$ is separable over $C(R)$. Analogously, $\tet(\L)C(R)$ is separable over $C(R)$.
	
	Since $R$ is a $\bet$-Galois extension of $\R$, $R$ is a separable extension of $\R$. Moreover, $\R \supseteq \cbet$. Once $\R$ is a separable extension of $\cbet$, it follows by \cite[Proposition 2.5 (2)]{HS} that $R$ is a separable extension of $\cbet$. Also, we have that $C(R) \supseteq \cbet$. Thus, by \cite[Proposition 2.5 (1)]{HS}, $R$ is a separable extension of $C(R)$, that is, $R$ is an Azumaya algebra. Clearly, $\tet(\HH)C(R)$ and $\tet(\L)C(R)$ are both subalgebras of $R$. Therefore, by \cite[Theorem 4.3]{demeyer}, $\tet(\HH)C(R)=V_R(V_R(\tet(\HH)C(R)))$ and $\tet(\L)C(R)=V_R(V_R(\tet(\L)C(R)))$. Thus, 
	\begin{align*}
	\tet(\HH)C(R)=V_R(V_R(\tet(\HH)C(R)))=V_R(V_R(\tet(\L)C(R)))=\tet(\L)C(R).
	\end{align*}
\end{proof}

\begin{lema}\label{lem6}
	Let $R$ be a $\bet$-Galois extension of $R^{\bet}$ and $\phi:S \mapsto \bigoplus_{g \in S}J_g$ for $S \subseteq S_\G$. Then $\phi$ is an injectivy map.
\end{lema}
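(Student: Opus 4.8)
The plan is to recover the set $S$ from the submodule $\phi(S)$ by exploiting the directness recorded in \autoref{lem1}, namely that $V_R(\R) = \bigoplus_{g \in \G} J_g$, so that every element of this module has a unique decomposition into its homogeneous components. The guiding observation is that for $g \in S_\G$ one has $g \in S$ if and only if $J_g \subseteq \phi(S)$; this characterization shows that $S$ is completely determined by the submodule $\phi(S)$, which is precisely injectivity. The nonvanishing condition built into $S_\G$ will be what makes the ``only if'' direction work.

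Concretely, I would fix two subsets $S, S' \subseteq S_\G$ with $\phi(S) = \phi(S')$ and take an arbitrary $g_0 \in S$. Since $g_0 \in S_\G$, by definition $J_{g_0} \neq \{0\}$, so I can choose a nonzero element $r \in J_{g_0}$. Then $r \in J_{g_0} \subseteq \bigoplus_{g \in S} J_g = \phi(S) = \phi(S') = \bigoplus_{g \in S'} J_g$, so $r$ admits a decomposition $r = \sum_{g \in S'} r_g$ with each $r_g \in J_g$. Viewing $r$ inside the total direct sum $\bigoplus_{g \in \G} J_g$ from \autoref{lem1}, its only possibly nonzero homogeneous component is the one in the $g_0$-slot, whereas the decomposition $r = \sum_{g \in S'} r_g$ uses only components indexed by $S'$. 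Uniqueness of the decomposition forces $r_{g_0} = r$ and $r_g = 0$ for $g \neq g_0$; in particular the $g_0$-component on the right must be nonzero, which is impossible unless $g_0 \in S'$, since otherwise $r = 0$, contradicting the choice of $r$. Hence $S \subseteq S'$, and by exchanging the roles of $S$ and $S'$ one gets $S' \subseteq S$, so $S = S'$ and $\phi$ is injective.

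I do not expect a genuine obstacle here: the entire argument reduces to \autoref{lem1} and the uniqueness of homogeneous decompositions in a direct sum. The only point demanding care is the degenerate case $r = 0$, and this is exactly ruled out by the hypothesis $g_0 \in S_\G$ guaranteeing $J_{g_0} \neq \{0\}$; this is where the restriction of the domain of $\phi$ to subsets of $S_\G$ is essential, as allowing indices with $J_g = \{0\}$ would immediately destroy injectivity.
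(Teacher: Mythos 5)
Your proof is correct, and it is genuinely leaner than the paper's. The paper spends the bulk of its own proof on a step you omit entirely: using the Galois coordinate system $\{x_i,y_i\}$ it first proves injectivity of $\phi$ on singletons, i.e.\ that $J_g=J_h\neq\{0\}$ with $g,h\in S_\G$ forces $g=h$ (choose $0\neq b\in J_g=J_h$, derive $(x1_{d(g)}-\bet_{g^{-1}h}(x1_{h^{-1}g}))b=0$ for all $x\in R$, then use $\sum_i x_i\bet_{g^{-1}h}(y_i1_{h^{-1}g})=\sum_{e\in\G_0}\delta_{e,g^{-1}h}1_e$ to force $g^{-1}h\in\G_0$); only afterwards does it run the subset argument, which is your argument in intersection form: for $l\in S\setminus S'$, directness gives $J_l\cap\bigl(\bigoplus_{h\in S'}J_h\bigr)=\{0\}$, contradicting $\{0\}\neq J_l\subseteq\phi(S)=\phi(S')$. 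You instead read everything off \autoref{lem1}: uniqueness of components in the internal direct sum $\bigoplus_{g\in\G}J_g$ already prevents a nonzero $J_{g_0}$ from sitting inside $\bigoplus_{g\in S'}J_g$ when $g_0\notin S'$, and in particular it subsumes the paper's singleton case, since two distinct slots of a direct sum cannot carry the same nonzero submodule. Under that standard reading of the symbol $\bigoplus$ in \autoref{lem1} --- the reading the paper itself relies on elsewhere, e.g.\ in \autoref{lem4} and \autoref{teo2} --- your argument is complete, and it makes transparent that the lemma is a purely module-theoretic consequence of \autoref{lem1} together with the definition of $S_\G$, the Galois hypothesis entering only through \autoref{lem1}. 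What the paper's longer route buys is independence from that reading: by excluding $J_g=J_h$ for $g\neq h$ directly from the Galois coordinates, it justifies the partition $\bigoplus_{g\in S_\G}J_g=\bigl(\bigoplus_{h\in S'}J_h\bigr)\oplus\bigl(\bigoplus_{h\notin S'}J_h\bigr)$ without leaning on indexed uniqueness, and it records the reusable extra fact that $g\mapsto J_g$ is injective on $S_\G$.
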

\begin{proof}
	First of all, let us check that $\phi$ restricted to the set of unitaries subsets of $S_\G$ is injective. Let $g, h \in S_\G$ such that $\phi(\{g\})=\phi(\{h\})$, that is, $J_g=J_h$. Consequently, $E_g \cap E_h \neq \{0\}$ and then $r(g)=r(h)$. Since $J_g=J_h \neq \{0\}$, there exists $0 \neq b \in E_g=E_h$ such that $b \in J_g=J_h$ and 
	\begin{align*}
	xb=b\bet_g(x1_{g^{-1}})=b\bet_h(x1_{h^{-1}})
	\end{align*}
	for all $x \in R$. Thus,  $b(\bet_g(x1_{g^{-1}})-\bet_h(x1_{h^{-1}}))=0$ for all $x \in R$. Putting $\bet_g$ in evidence, $b\bet_g((x1_{d(g)})-\bet_{g^{-1}h}(x1_{h^{-1}g}))=0$ for all $x \in R$. Notice that $\exists \, g^{-1}h$, since $r(g)=r(h)$. Moreover, since $b \in J_g$, $b\bet_g(x1_{g^{-1}}) = xb$ for all $x \in R$. Hence,
	\begin{align}\label{eq1}
	b\bet_g(x1_{d(g)}-\bet_{g^{-1}h}(x1_{h^{-1}g}))=(x1_{d(g)}-\bet_{g^{-1}h}(x1_{h^{-1}g})b=0
	\end{align}
	for all $x \in R$. Since $R$ is a $\bet$-Galois extension of $R^{\bet}$, there exist $x_i, y_i \in R, 1 \leq i \leq n$ such that  $\sum_{i=1}^n x_i\beta_g(y_i1_{g^{-1}})= \sum_{e \in \G_0}\delta_{e,g}1_e$, for all $g \in \G$ and $e \in \G_0$. Changing $x$ by $y_i$ and multiplying by $x_i$ in \eqref{eq1} we have that $$x_i(y_i1_{d(g)}-\beta_{g^{-1}h}(y_i1_{h^{-1}g}))b=0$$ for all $1 \leq i \leq n$. Then,
	\begin{align*}
	\sum_{i=1}^nx_i(y_i1_{d(g)}-\bet_{g^{-1}h}(y_i1_{h^{-1}g}))b = 0 \Rightarrow (\sum_{i=1}^n x_iy_i)b=(\sum_{i=1}^n x_i\bet_{g^{-1}h}(y_i1_{h^{-1}g}))b.
	\end{align*}
	Thereby,
	\begin{align*}
	b&=1_{r(g)}b=(\sum_{i=1}^n x_i\bet_{r(g)}(y_i1_{r(g)}))b=(\sum_{i=1}^nx_iy_i1_{r(g)})b=(\sum_{i=1}^n x_iy_i)b \\
	&=(\sum_{i=1}^n x_i\bet_{g^{-1}h}(y_i1_{h^{-1}g}))b=(\sum_{e \in \G_0}\delta_{e,{g^{-1}h}}1_e)b.
	\end{align*}
	Since $b \neq 0$ we have that $g^{-1}h \in \G_0$ and then
	\begin{align*}
	g^{-1}h=r(g^{-1}h)=r(g^{-1})=d(g).
	\end{align*}
	Thus,
	\begin{align*}
	h=r(h)h=r(g)h=gg^{-1}h=gd(g)=g.
	\end{align*}
	Therefore, $\{g\}=\{h\}$. Finally, let $S, S'$ be two non-empty subsets of $S_\G$ such that $\phi(S)=\phi(S')$. That is, $\bigoplus_{g \in S}J_g=\bigoplus_{h \in S'}J_h$. 
	\\
	\textbf{Claim:} For each $l \in S$ we have that $l \in S'$.
	\\
	Suppose that $l \notin S'$. Since 
	\begin{align*}
	\bigoplus_{g \in S_\G}J_g=(\bigoplus_{h \in S'}J_h) \oplus (\bigoplus_{h \notin S'}J_h )
	\end{align*}
	by the first part of the proof, $J_l \bigcap (\bigoplus_{h \in S'}J_h) =\{0\} $ for $l \notin S'$. Thus, $J_l \nsubseteq \bigoplus_{h \in S'}J_h$, which is a contradiction, because $J_l \subseteq \bigoplus_{g \in S}J_g=\bigoplus_{h \in S'}J_h$, for $l \in S$. Then $l \in S'$ and therefore $S \subseteq S'$. 
	
	Similarly, $S' \subseteq S$. Hence $S=S'$.
\end{proof}

\begin{teo}\label{teo2}
	Let $R$ be a $\bet$-Galois extension of $R^{\bet}$ such that $R^{\bet}$ is separable over $C(R)^{\bet}$. Then $\sii:\overline{\HH} \mapsto \tet(\HH)C(R)$ and $\gaa:\overline{\HH} \mapsto V_R(\tet(\HH))$, $\overline{\HH} \in \mathcal{A}$, are both injective maps.
\end{teo}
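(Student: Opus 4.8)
The plan is to reduce both injectivity claims to the injectivity of the map $\phi$ from \autoref{lem6}, using the commutator identity of \autoref{lem1} to identify $\gaa$ with $\phi$, and \autoref{prop1}$(i)$ to pull $\sii$ back to $\gaa$. The key observation is that, for a wide subgroupoid $\HH$,
$$\gaa(\overline{\HH}) = V_R(\tet(\HH)) = V_R(\Rh) = \bigoplus_{h \in \HH} J_h = \bigoplus_{h \in S_\HH} J_h = \phi(S_\HH),$$
where the middle equality is \autoref{lem1} applied to the restricted action $\bet_\HH$ of $\HH$ on $R$ (valid because $\HH$ is wide, so $R_\HH = R$ and each $J_h$ is unaffected by the restriction), and the penultimate equality holds since $J_h = \{0\}$ for every $h \in T_\HH$. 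In other words, $\gaa$ is just $\phi$ composed with the tautological assignment $\overline{\HH} \mapsto S_\HH$, which is injective by the very definition of the classes in $\mathcal{A}$.

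First I would settle $\gaa$. If $\gaa(\overline{\HH}) = \gaa(\overline{\L})$ with $\overline{\HH}, \overline{\L} \in \mathcal{A}$, then the displayed identity gives $\phi(S_\HH) = \phi(S_\L)$ with $S_\HH, S_\L \subseteq S_\G$; \autoref{lem6} then yields $S_\HH = S_\L$, which is exactly $\overline{\HH} = \overline{\L}$. Next I would deduce injectivity of $\sii$ by passing through the commutator: from $\sii(\overline{\HH}) = \sii(\overline{\L})$, that is $\tet(\HH)C(R) = \tet(\L)C(R)$, I apply $V_R$ to both sides and use \autoref{prop1}$(i)$, which says $V_R(S_1) = V_R(S_1 C(R))$, to obtain $V_R(\tet(\HH)) = V_R(\tet(\HH)C(R)) = V_R(\tet(\L)C(R)) = V_R(\tet(\L))$. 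Hence $\gaa(\overline{\HH}) = \gaa(\overline{\L})$, and the injectivity of $\gaa$ just proved forces $\overline{\HH} = \overline{\L}$.

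I do not expect a genuine obstacle at this stage: all the substance has been front-loaded into the auxiliary results — the well-definedness of $\gaa$ and $\sii$ (\autoref{lem4}, \autoref{lem5}), the decomposition $V_R(\Rh) = \bigoplus_{h \in \HH} J_h$ (\autoref{lem1}), the injectivity of $\phi$ (\autoref{lem6}, whose Galois-coordinate computation is the one genuinely delicate argument), and the commutator invariance of \autoref{prop1}$(i)$. The only side point to record is that both maps take values in $\mathcal{B}$: for $\sii$ this is the separability Claim established inside \autoref{lem5}, and for $\gaa$ it follows from the same Azumaya setup, since the commutator in $R$ of a $C(R)$-separable subalgebra is again $C(R)$-separable. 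Granting all this, the theorem is a short assembly of the two steps above rather than a new argument.
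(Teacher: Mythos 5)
Your proposal is correct and takes essentially the same route as the paper: both arguments come down to applying $V_R$, identifying $V_R(\tet(\HH))=\bigoplus_{h\in S_\HH}J_h$ via \autoref{lem1}, invoking the injectivity of $\phi$ from \autoref{lem6} to get $S_\HH=S_\L$, and using \autoref{prop1} to pass between $\tet(\HH)C(R)$ and $V_R(\tet(\HH))$. The only difference is organizational — you prove injectivity of $\gaa$ first and reduce $\sii$ to it, while the paper runs the same chain separately for each map (and, incidentally, you cite \autoref{prop1}$(i)$ for the step the paper attributes to $(ii)$, which is the more accurate reference).
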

\begin{proof}
	Suposse that $\sii(\overline{\HH})=\sii(\overline{\L})$. Note that 
	\begin{align*}
	R^{\bet_\HH} C(R)=\tet(\HH)C(R)=\sii(\overline{\HH})=\sii(\overline{\L})=\tet(\L)C(R)=R^{\bet_\L}C(R)
	\end{align*}
	for $\HH, \L$ subgroupoids of $\G$. Thus, $V_R(R^{\bet_\HH} C(R))=V_R(R^{\bet_\L}C(R))$. By \autoref{prop1} $(ii)$, $V_R(R^{\bet_\HH})=V_R(R^{\bet_\HH} C(R))$ and $V_R(R^{\bet_\L})=V_R(R^{\bet_\L}C(R))$.
	
	Then $V_R(R^{\bet_\HH})=V_R(R^{\bet_\HH} C(R))=V_R(R^{\bet_\L}C(R))=V_R(R^{\bet_\L})$. By \autoref{lem1}, $\bigoplus_{h \in S_\HH}J_h=\bigoplus_{l \in S_\L}J_l$ and by \autoref{lem6} $S_\HH=S_\L$. So $\overline{\HH}=\overline{\L}$. Therefore, $\sii$ is injective. 
	
	Now, suppose that $\gaa(\overline{\HH})=\gaa(\overline{\L})$. Then, $V_R(\tet(\HH))=V_R(\tet(\L))$. By \autoref{lem1}, 
	\begin{align*}
	\bigoplus_{h \in S_\HH}J_h=V_R(R^{\bet_\HH})=V_R(\tet(\HH))=V_R(\tet(\L))=V_R(R^{\bet_\L})=\bigoplus_{l \in S_\L}J_l.
	\end{align*}
	From \autoref{lem6}, $S_\HH=S_\L$ and then $\overline{\HH}=\overline{\L}$. Therefore, $\gaa$ is injective.
\end{proof}

\section{Injectivity of the Galois map}

In this section we will see conditions under which the Galois map $\tet:\HH \mapsto R^{\bet_{\HH}}$ is injective.

\begin{lema}\label{lem7} The map $\si:\HH \mapsto \tet(\HH)C(R)$ is injective if and only if the map $\ga:\HH \mapsto V_R(\tet(\HH))$ is injective.
\end{lema}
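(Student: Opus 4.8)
The plan is to realize $\si$ and $\ga$ as two factorizations of one and the same map through the barred maps, so that both injectivities collapse to a single condition. Let $\pi$ denote the assignment $\HH\mapsto\overline{\HH}$ sending a wide subgroupoid of $\G$ to its class in $\mathcal{A}$. Since $\sii$ is well defined (\autoref{lem5}) and $\gaa$ is well defined (\autoref{lem4}), the very definitions of these maps give $\si(\HH)=\tet(\HH)C(R)=\sii(\overline{\HH})$ and $\ga(\HH)=V_R(\tet(\HH))=\gaa(\overline{\HH})$ for every wide subgroupoid $\HH$; in other words $\si=\sii\circ\pi$ and $\ga=\gaa\circ\pi$.

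The key input is \autoref{teo2}, which asserts that $\sii$ and $\gaa$ are both injective on $\mathcal{A}$. I would then invoke the elementary fact that if $f=g\circ h$ with $g$ injective, then $f$ is injective if and only if $h$ is injective. Applied to $\si=\sii\circ\pi$ this yields ``$\si$ injective $\iff$ $\pi$ injective'', and applied to $\ga=\gaa\circ\pi$ it yields ``$\ga$ injective $\iff$ $\pi$ injective''; chaining the two equivalences gives the claim. Unwound, the argument is the chain $\si(\HH)=\si(\L)\iff S_\HH=S_\L\iff\ga(\HH)=\ga(\L)$, valid for all wide subgroupoids $\HH,\L$: each outer equivalence holds by well-definedness of the relevant barred map (direction ``$\Leftarrow$'') and by its injectivity from \autoref{teo2} (direction ``$\Rightarrow$''), while $\overline{\HH}=\overline{\L}$ means precisely $S_\HH=S_\L$. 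Thus $\si$ and $\ga$ have identical fibers, so one is injective exactly when the other is.

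The genuine content of the statement therefore lives inside \autoref{teo2}, and once that is granted no serious obstacle remains. The point worth flagging is where the difficulty would resurface in a direct proof bypassing the barred maps. The implication ``$\ga$ injective $\Rightarrow\si$ injective'' is cheap: by \autoref{prop1}$(i)$ one has $V_R(\tet(\HH))=V_R(\tet(\HH)C(R))$, i.e. $\ga=V_R\circ\si$, so $\si(\HH)=\si(\L)$ immediately forces $\ga(\HH)=\ga(\L)$. The reverse implication ``$\si$ injective $\Rightarrow\ga$ injective'' is the hard half: from $\ga(\HH)=\ga(\L)$ one only extracts $V_R(\si(\HH))=V_R(\si(\L))$, and cancelling the $V_R$ requires that $V_R$ be injective on the subalgebras $\tet(\HH)C(R)$. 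This is exactly the double centralizer property for the separable subalgebras $\tet(\HH)C(R)$ of the Azumaya algebra $R$ over $C(R)$, established inside the proof of \autoref{lem5}; routing the argument through \autoref{teo2} is precisely what lets me reuse that step instead of repeating it.
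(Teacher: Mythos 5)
Your proposal is correct, and it takes a genuinely different route from the paper's own proof. The paper argues the two implications directly: assuming $\si$ injective and $\ga(\HH)=\ga(\L)$, it passes via \autoref{prop1} to $V_R(\tet(\HH)C(R))=V_R(\tet(\L)C(R))$, invokes \autoref{lem3} to get separability of $\tet(\HH)C(R)$ and $\tet(\L)C(R)$ over $C(R)$, and then cancels $V_R$ by the double centralizer theorem for Azumaya algebras \cite[Theorem 4.3]{demeyer}, concluding $\si(\HH)=\si(\L)$; the converse implication is the cheap one via \autoref{prop1}$(i)$. You instead factor $\si=\sii\circ\pi$ and $\ga=\gaa\circ\pi$ through the quotient $\pi:\HH\mapsto\overline{\HH}$ (legitimate by the well-definedness results \autoref{lem4} and \autoref{lem5}) and let \autoref{teo2} carry the load: since $\sii$ and $\gaa$ are injective, each of $\si$, $\ga$ is injective exactly when $\pi$ is. The logic is sound and there is no circularity, since \autoref{lem4}, \autoref{lem5} and \autoref{teo2} are established before and independently of \autoref{lem7}; note also that both arguments silently require the same standing hypotheses ($R$ a $\bet$-Galois extension with $\R$ separable over $\cbet$), which the paper's proof uses through \autoref{lem3} and the Azumaya property, and yours uses through the cited lemmas. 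What your route buys: it is purely formal given Section 3, it yields the stronger conclusion that $\si$ and $\ga$ have identical fibers, described by the relation $S_\HH=S_\L$, and it makes \autoref{teo3} an immediate byproduct. What the paper's route buys: it exhibits the concrete mechanism (the double centralizer property) converting equality of commutators into equality of algebras, and it displays the asymmetry of the two directions---one trivial, one requiring the Azumaya machinery---which your closing paragraph correctly identifies as the real content, hidden in the proof of \autoref{lem5}.
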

\begin{proof}
	Suppose that $\si$ is injective. Let $\HH, \L$ be subgroupoids of $\G$ such that $\ga(\HH)=\ga(\L)$. Then $V_R(\tet(\HH))=V_R(\tet(\L))$, that is, $V_R(R^{\bet_{\HH}})=V_R(R^{\bet_{\L}})$. Thus, by \autoref{prop1} $(ii)$, $V_R(R^{\bet_{\HH}} C(R))=V_R(R^{\bet_{\L}} C(R))$. Since $R^{\bet_{\HH}}$ and $R^{\bet_{\L}}$ are both separable algebras over $\cbet$ by \autoref{lem3}, then $R^{\bet_{\HH}} C(R)$ and $R^{\bet_{\L}} C(R)$ are separable over $C(R)$. Moreover, $R$ is an Azumaya algebra. Then, by \cite[Theorem 4.3]{demeyer},
	\begin{align*}
	R^{\bet_{\HH}} C(R)=V_R(V_R(R^{\bet_{\HH}} C(R)))=V_R(V_R(R^{\bet_{\L}} C(R)))=R^{\bet_{\L}} C(R).
	\end{align*}
	Thereby $\si(\HH)=\si(\L)$. Since $\si$ is injective, it follows that $\HH=\L$. Therefore, $\ga$ is injective. Reciprocally, supose that $\ga$ is injective. Let $\HH, \L$ be subgroupoids of $\G$ such that $\si(\HH)=\si(\L)$. Then $\tet(\HH)C(R)=\tet(\L)C(R)$. From \autoref{prop1} $(i)$, $V_R(\tet(\HH))=V_R(\tet(\HH)C(R))$ and $V_R(\tet(\L))=V_R(\tet(\L)C(R))$. Thus 
	\begin{align*}
	V_R(\tet(\HH))=V_R(\tet(\HH)C(R))=V_R(\tet(\L))=V_R(\tet(\L)C(R)).
	\end{align*}
	So $\ga(\HH)=\ga(\L)$. Since $\ga$ is injective, we have that $\HH=\L$. Therefore, $\si$ is injective.
\end{proof}

\begin{lema}\label{lem8} If either the map $\si:\HH \mapsto \tet(\HH)C(R)$ or $\ga:\HH \mapsto V_R(\tet(\HH))$ is injective, then the Galois map $\tet:\HH \mapsto R^{\bet_{\HH}}$ is injective.
\end{lema}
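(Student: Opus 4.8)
The plan is to exploit that both induced maps are obtained from the Galois map $\tet$ by post-composition with a single-valued assignment: $\si$ is $\tet$ followed by $A\mapsto AC(R)$, and $\ga$ is $\tet$ followed by $A\mapsto V_R(A)$. In general, if a map can be written as $f\circ\tet$ with $f$ a genuine function, then injectivity of the composite forces injectivity of $\tet$, because $\tet(\HH)=\tet(\L)$ immediately gives $f(\tet(\HH))=f(\tet(\L))$. Thus the whole statement reduces to this elementary ``left factor'' observation.

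Concretely, by \autoref{lem7} the injectivity of $\si$ and of $\ga$ are equivalent, so it is enough to argue from whichever hypothesis is assumed; I would treat the two cases symmetrically. First, assuming $\si$ injective, I take wide subgroupoids $\HH,\L$ of $\G$ with $\tet(\HH)=\tet(\L)$, that is $R^{\bet_\HH}=R^{\bet_\L}$. Multiplying this equality of subalgebras by $C(R)$ yields $R^{\bet_\HH}C(R)=R^{\bet_\L}C(R)$, i.e.\ $\si(\HH)=\si(\L)$, and injectivity of $\si$ then forces $\HH=\L$. For the other case, assuming $\ga$ injective, from $R^{\bet_\HH}=R^{\bet_\L}$ I pass to $V_R(R^{\bet_\HH})=V_R(R^{\bet_\L})$, that is $\ga(\HH)=\ga(\L)$, and injectivity of $\ga$ gives $\HH=\L$.

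I do not expect any real obstacle here: the content of the statement is purely formal, resting only on the fact that $\tet$ is the left factor in each composite and that applying $\cdot\,C(R)$ (respectively $V_R(-)$) to two equal subalgebras produces equal outputs, which is immediate. All the substantive work — in particular that the two injectivity hypotheses are genuinely equivalent — has already been carried out in \autoref{lem7}; \autoref{lem8} merely transports that injectivity back along the Galois map.
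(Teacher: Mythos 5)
Your proof is correct, and for half of the statement it coincides with the paper's: assuming $\si$ injective, both you and the authors take $\tet(\HH)=\tet(\L)$, multiply by $C(R)$ to get $\si(\HH)=\si(\L)$, and conclude $\HH=\L$. Where you diverge is the second case. The paper does not argue symmetrically: assuming $\ga$ injective, it invokes \autoref{lem7} to deduce that $\si$ is injective and then falls back on the first case. You instead handle $\ga$ directly — from $\tet(\HH)=\tet(\L)$ you apply $V_R(-)$ to both sides to get $\ga(\HH)=\ga(\L)$ — which is just as immediate and makes your opening citation of \autoref{lem7} superfluous; your actual argument never uses it. This difference is worth noting: \autoref{lem7} is not a free equivalence — its proof runs through \autoref{lem3}, the Azumaya property of $R$, and the double-commutator theorem, all of which rest on the standing hypotheses that $R$ is a $\bet$-Galois extension with $R^{\bet}$ separable over $C(R)^{\bet}$. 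Your symmetric ``left factor'' argument is purely formal (equal inputs to a function give equal outputs) and so establishes \autoref{lem8} with no hypotheses at all, exactly as the lemma is stated; the paper's route quietly imports the separability assumptions into the $\ga$ case. In short: same idea for $\si$, a cleaner and strictly more general treatment for $\ga$.
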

\begin{proof}
	Assume that $\si$ is injective. Let $\HH, \L$ be subgroupoids of $\G$ such that $\tet(\HH)=\tet(\L)$. Then, 
	\begin{align*}
	R^{\bet_{\HH}}=R^{\bet_{\L}} \Rightarrow R^{\bet_{\HH}} C(R)= R^{\bet_{\L}} C(R) \Rightarrow \si(\HH)=\si(\L).
	\end{align*}
	Since $\si$ is injective, then $\HH=\L$. Thus, $\tet$ is injective. 
	
	Now supposing that $\ga$ is injective, by \autoref{lem7}, $\si$ is injective. Consequently, $\tet$ is injective. 
\end{proof}

\begin{teo}\label{teo3} If $\overline{\HH}=\{\HH\}$, that is, a singleton, for each $\HH$ wide subgroupoid of $G$, then $\tet$ is injective.
\end{teo}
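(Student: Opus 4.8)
The plan is to reduce the statement to the injectivity of the induced map $\si$, and then to invoke \autoref{lem8}. So the goal becomes the following: under the hypothesis that $\overline{\HH}=\{\HH\}$ for every wide subgroupoid $\HH$ of $\G$, the map $\si:\HH \mapsto \tet(\HH)C(R)$ is injective.

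The key observation I would use is that $\si$ factors through the passage to $\overline{\HH}$-classes. Writing $q$ for the assignment $\HH \mapsto \overline{\HH}$ on wide subgroupoids, the definitions of $\si$ and $\sii$ give $\si(\HH)=\tet(\HH)C(R)=\sii(\overline{\HH})=(\sii \circ q)(\HH)$, and this factorization is legitimate precisely because $\sii$ is well-defined by \autoref{lem5}. Thus injectivity of $\si$ will follow once I verify that both $q$ and $\sii$ are injective.

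For $q$: suppose $q(\HH)=q(\L)$, i.e.\ $\overline{\HH}=\overline{\L}$. Since $\L \in \overline{\L}=\overline{\HH}$ and, by hypothesis, $\overline{\HH}=\{\HH\}$, I get $\L=\HH$; hence $q$ is injective. For $\sii$: since $R$ is a $\bet$-Galois extension of $\R$ with $\R$ separable over $\cbet$ (the standing assumption of this section), \autoref{teo2} tells us that $\sii$ is injective. Therefore $\si=\sii \circ q$ is a composite of injective maps, hence injective, and \autoref{lem8} then yields that the Galois map $\tet$ is injective, as desired.

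I do not expect any serious obstacle here: the heavy lifting was already done in \autoref{teo2} (injectivity of $\sii$) and \autoref{lem8} (passage from injectivity of $\si$ to injectivity of $\tet$), so the argument is essentially a composition of facts already established. The only point requiring a moment's care is the translation of the singleton condition into the injectivity of the quotient map $q$, which is immediate from the definition of $\overline{\HH}$.
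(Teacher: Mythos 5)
Your proposal is correct and follows essentially the same route as the paper: both arguments rest on \autoref{lem5} (well-definedness of $\sii$), \autoref{teo2} (injectivity of $\sii$), and the translation of the singleton hypothesis into recovering $\HH$ from $\overline{\HH}$. The only difference is organizational — the paper argues directly that $\tet(\HH)=\tet(\L)$ forces $\sii(\overline{\HH})=\sii(\overline{\L})$, hence $\overline{\HH}=\overline{\L}$, hence $\HH=\L$, whereas you regroup the same chain of implications by first proving $\si$ injective via the factorization $\si=\sii\circ q$ and then invoking \autoref{lem8}.
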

\begin{proof}
	Let $\HH, \L$ be subgroupoids of $\G$. Then, 
	\begin{align*}
	R^{\bet_{\HH}} = R^{\bet_{\L}} \Rightarrow R^{\bet_{\HH}} C(R) = R^{\bet_{\L}} C(R).
	\end{align*}
	Thus, by \autoref{lem5}, $\si(\overline{\HH})=\si(\overline{\L})$. As $\si$ is injective by \autoref{teo2}, it follows that $\overline{\HH}=\overline{\L}$. Since $\overline{\HH}=\{\HH\}$ and $\overline{\L}=\{\L\}$, then $\HH=\L$. Thus $\tet$ is injective.
\end{proof}

Next theorem extends \cite[Theorem 3.3]{pataIII}.

\begin{teo}\label{teo4} Let $\langle S_\HH \rangle$ be the subgroupoid of $\G$ generated by the elements of $S_\HH$, for $\HH$ wide subgroupoid of $\G$. If $\langle S_\HH\rangle =\HH$ for each wide subgroupoid $\HH$ of $\G$, then $\tet$ is injective.
\end{teo}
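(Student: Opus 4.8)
The plan is to show that the hypothesis $\langle S_\HH\rangle=\HH$ is exactly the ingredient needed to upgrade the injectivity of the auxiliary maps established in Section 3 into the injectivity of $\tet$ itself. Concretely, I will take two wide subgroupoids $\HH,\L$ of $\G$ with $\tet(\HH)=\tet(\L)$ and aim to deduce $S_\HH=S_\L$; the equality of the generating sets then forces $\langle S_\HH\rangle=\langle S_\L\rangle$, and the standing hypothesis collapses this to $\HH=\L$.

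First, from $\tet(\HH)=\tet(\L)$, i.e.\ $R^{\bet_\HH}=R^{\bet_\L}$, I would pass to commutators to get $V_R(R^{\bet_\HH})=V_R(R^{\bet_\L})$. Applying \autoref{lem1} to the wide subgroupoid actions $\bet_\HH$ and $\bet_\L$---so that $V_R(R^{\bet_\HH})=\bigoplus_{h\in S_\HH}J_h$ and likewise for $\L$---this reads
\[
\bigoplus_{h\in S_\HH}J_h=\bigoplus_{l\in S_\L}J_l .
\]
Since $S_\HH,S_\L\subseteq S_\G$, \autoref{lem6} gives the injectivity of $S\mapsto\bigoplus_{g\in S}J_g$, whence $S_\HH=S_\L$. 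Consequently $\langle S_\HH\rangle=\langle S_\L\rangle$, and the hypothesis yields $\HH=\langle S_\HH\rangle=\langle S_\L\rangle=\L$, proving that $\tet$ is injective.

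I do not expect a serious obstacle: the argument is a short chain of already-proved facts, and its only delicate point is bookkeeping. One must check that the submodule $J_h$ is intrinsic, i.e.\ that it coincides whether computed for the action $\bet$ or for its restriction $\bet_\HH$ to a wide subgroupoid; this holds because $\HH$ wide means $\HH$ acts on all of $R=\bigoplus_{e\in\G_0}E_e$ via the same isomorphisms $\bet_h$, so the defining condition of $J_h$ is unchanged and \autoref{lem1} applies verbatim to $\bet_\HH$. One must also confirm that \autoref{lem6} is applicable, which only requires $R$ to be a $\bet$-Galois extension. Alternatively, mirroring the proof of \autoref{teo3}, one could route the same reduction through the map $\sii$: from $R^{\bet_\HH}C(R)=R^{\bet_\L}C(R)$ and \autoref{lem5} obtain $\sii(\overline{\HH})=\sii(\overline{\L})$, then invoke the injectivity of $\sii$ from \autoref{teo2} to get $\overline{\HH}=\overline{\L}$, i.e.\ $S_\HH=S_\L$, before applying the hypothesis exactly as above.
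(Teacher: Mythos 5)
Your proposal is correct and follows essentially the same route as the paper's own proof: pass from $\tet(\HH)=\tet(\L)$ to $V_R(R^{\bet_\HH})=V_R(R^{\bet_\L})$, identify these commutators via \autoref{lem1} as $\bigoplus_{h\in S_\HH}J_h=\bigoplus_{l\in S_\L}J_l$, conclude $S_\HH=S_\L$ from \autoref{lem6}, and invoke the hypothesis $\langle S_\HH\rangle=\HH$. Your bookkeeping remark that $J_h$ is the same whether computed for $\bet$ or for $\bet_\HH$ (so that \autoref{lem1} applies to the restricted action), and your observation that this route needs no separability hypothesis, are both correct and in fact match the paper's remark following the theorem.
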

\begin{proof}
	Let $\HH, \L$ be subgroupoids of $\G$ such that $\tet(\HH)=\tet(\L)$. Then $R^{\bet_{\HH}} = R^{\bet_{\L}}$. By \autoref{lem1}, $\bigoplus_{h \in \HH} J_h=V_R(R^{\bet_{\HH}})$ and $\bigoplus_{l \in \L} J_l=V_R(R^{\bet_{\L}})$. Thus, $\bigoplus_{h \in \HH} J_h=\bigoplus_{l \in \L} J_l$. By  \autoref{lem6}, $S_\HH\!=\!S_\L$. Consequently, $\langle S_\HH\rangle= \langle S_\L\rangle$. By hypothesis, $\HH=\L$. Therefore, $\tet$ is injective.
\end{proof}

Observe that \autoref{teo4} holds for any $\bet$-Galois extension $R$ over $\R$,  this extension not being necessarily separable over $\cbet$.

\begin{cor}\label{cor1} \cite[Theorem 3.3]{pataIII}
	If $J_g \neq \{0\}$ for each $g \in \G$, then $\tet$ is injective.
\end{cor}
\begin{proof}
	If $J_g \neq \{0\}$ for each $g \in \G$, it occurs, in particular, for each $h \in \HH$, for all $\HH$ wide subgroupoid of $\G$. Hence $\langle S_\HH \rangle \, = S_\HH=\HH$. Thus, by \autoref{teo4}, $\tet$ is injective.
\end{proof}

\bibliographystyle{amsalpha}
{}

\end{document}